\newcommand{\bbB}[0]{{\mathbb B}}
\newcommand{\bbS}[0]{{\mathbb S}}
\newcommand{\bbC}[0]{{\mathbb C}}
\newcommand{\bbD}[0]{{\mathbb D}}
\newcommand{\subsetdot}[0]{\mathrel{\dot{\subseteq}}}
\newcommand{\equaldot}[0]{\mathrel{\dot{=}}}
\newcommand{\dotin}[0]{ \,\dot{\in}\,}
\newcommand{\refl}[0]{{\rm ref}}
\newcommand{\omitthis}[1]{}
\newcommand{\longtext}[1]{}
\newcommand{\shorttext}[1]{}
\newcommand{\commentaway}[1]{}
\newtheorem{theorem}{Theorem}[section]
\newtheorem{proposition}[theorem]{Proposition}
\newtheorem{example}[theorem]{Example}
\begin{document}

\title{Yet another category of setoids \\ with equality on objects}
\author{Erik Palmgren \\ Department of Mathematics, Stockholm University}
\date{April 21, 2013}

\maketitle

\begin{abstract} When formalizing mathematics in (generalized predicative) constructive type theories, or more practically in proof assistants such as Coq or Agda,  one is often using setoids (types with explicit
equivalence relations). In this note we consider two categories of  setoids with equality on objects and show that they are isomorphic. Both categories are constructed from a fixed proof-irrelevant family $F$ of setoids. The objects
of the categories are the index setoid  $I$ of the family, whereas the definition of arrows differs. The first category has for arrows triples $(a,b,f:F(a) \to F(b))$ where $f$ is an extensional function. Two such arrows are identified if appropriate 
composition with transportation maps (given by $F$) makes them equal. In the second category the arrows are triples $(a,b,R \hookrightarrow \Sigma(I,F)^2)$ where $R$ is a total functional relation between the subobjects $F(a), F(b) \hookrightarrow \Sigma(I,F)$ of the setoid sum of the family. This category is simpler to use as the transportation maps disappear. Moreover we also show that the full image of a category along an E-functor into an E-category is category.

\end{abstract}

\section{Introduction}

In type theory there is a choice whether categories should be equipped with an equality on objects or not. Categories without
such equality are called E-categories. For some purposes equality on objects seem necessary, but here one already encounter
problems to construct rich categories of setoids. (Setoids are types with equivalence relations, the natural notion of set in type theory).
From any proof-irrelevant family of setoids $F$ over a setoid $A$  arises a category ${\cal C}(A,F)$ of setoids 
as shown in \cite{W,PW}. This is perhaps the category which is closest
to the type-theoretic language: the setoid $A$ constitutes the objects,
a morphism $(a,b,f)$ consists of an extensional function $f:F(a) \to F(b)$ where $ a, b \in A$.
Composition with another morphism $(b',c,g)$ is possible if $b=_A b'$ and is defined using a 
transportation function $F(p):F(b) \to F(b')$ associated with the proof $p:b=_A b'$. 
Equality of two morphisms $(a,b,f)$ and $(a',b',f')$  is taken to mean that there are proofs $p:a=_Aa'$ and $q:b=_Ab'$ such that
$$\bfig\square[F(a)`F(b)`F(a')`F(b');f`F(p)`F(q)`f']\efig$$
commutes. 
This equality is a slightly cumbersome notion when working with this category.
We show (Theorem \ref{mainiso}) that this category is isomorphic to another category ${\cal S}(A,F)$  where the morphisms corresponds to 
functional relations on $\Sigma(A,F)$ the setoid-sum of the family. In this category the transportation functions are invisible, which
makes for a smoother treatment of the category of setoids akin to a category of sets in set theory. As shown in \cite{PW} we may chose $F$ so that
 ${\cal C}(A,F)$ is isomorphic to a category of sets in a model of constructive set theory CZF, thus ensuring rich properties of the category.

\section{Families of setoids} 

Recall from, for instance \cite{P12} or \cite{PW}, that a good notion of a family of setoids over a setoids is the following.
A {\em proof-irrelevant family $F$ of setoids over $A$} --- or just {\em family of setoids} --- consists of a setoid
$F(x) = (|F(x)|, =_{F(x)})$  for each $x\in A$, 
and for $p: (x=_A y)$ an extensional function $F(p): F(x) \to F(y)$ (the transportation function)
which satisfies the three conditions: 
\begin{itemize}
\item[(F1)] $F(\refl(x))=_{\rm ext} {\rm id}_{F(x)}$ for  $x \in A$. Here $\refl(x)$ is the canonical proof object for $x=_A x$ and $=_{\rm ext}$ denotes the extensional equality of functions.

\item[(F2)] $F(p) =_{\rm ext} F(q)$ for $p,q : x=_A y$ and $x,y \in A$. Since
$F(p)$ does not depend on $p$, this is the proof-irrelevance condition.

\item[(F3)] $F(q) \circ F(p) =_{\rm ext} F(q \circ p)$ for 
$p: x =_A y$, $q: y =_A z$ and $x,y,z \in A$.
\end{itemize}

\medskip
Proof-irrelevant families may arise as functions $I \to {\rm P}(A)$ from the index setoid $I$ 
into the collection of subsetoids of a fixed setoid $A$ in the following way.
Let $A$ be the fixed setoid. Let ${\rm P}(A)$ denote the following
preorder. Its elements are injections $m: U \to A$, where $U$ is setoid. Let $n: V \to A$ be another injection. We say that it {\em includes}  $m: U \to A$, in symbols $(U,m) \subsetdot(V,n)$, if
there is a function $k:U \to V$ such that $n \circ k = m$. (Note that $k$ is unique and an injection.) Now define
 $m: U \to A$  and  $n: V \to A$ to be {\em equal},  or in symbols $(U,m) \equaldot (V,n)$,  if 
$(U,m) \subsetdot (V,n)$ and $(V,n) \subsetdot (U,m)$. Thus ${\rm P}(A)$  has an equivalence relation. 
Indeed, defining for $x \in A$ and $(U,m) \in {\rm P}(A)$, a membership relation
$$x \dotin (U,m) \Longleftrightarrow_{\rm def} (\exists u \in U)x=_A m(u),$$
we get using unique choice
$$(U,m) \subsetdot (V,n) \mbox{ iff } (\forall x \in X)(x \,\dot{\in}\, (U,m) \Rightarrow x \,\dot{\in}\, (V,n)).$$
Thus we see that $(U,m) \equaldot (V,n)$ is extensional equality.

A {\em family of subsetoids of $A$ indexed by a setoid $I$} is an extensional function $F:I \to {\rm P}(A)$.  Write $F(i) = (\hat{F}(i), m_i)$. We may now extend $\hat{F}$ to a proof-irrelevant family in a canonical way: for a proof $p$ of $i=_I j$, we have 
$F(i) \equaldot F(j)$  so there is a  unique $f$ such that the following diagram commute
$$\bfig
\Vtriangle(0,0)/>`{ >}->`{ >}->/[\hat{F}(i)`\hat{F}(j)`A;f`m_i`m_j]
\efig
$$
We let $\hat{F}(p) =_{\rm def}f$. By the above it is unique and independent of $p$, so (F2) holds. If $i=j$ definitionally, then $f$ is extensionally
equal to the identity on $\hat{F}(i)$. This verifies (F1). The condition (F3) of $\hat{F}$ is easy to check using uniqueness. 

Conversely, from every proof-irrelevant family $F$ on $I$ we get  a family $\check{F}: I \to {\rm P}(A)$ for a canonical $A$; see Proposition \ref{pif2fsub} below. To prove this we introduce the setoid-sum construction.
%
Let  $F$ be a family of setoids over the setoid $I$.
The {\em disjoint sum of the family} exists in {\bf Setoids} and may be explicitly given by
$$\Sigma(I,F) =_{\rm def} \bigl((\Sigma x :|I|)|F(x)|, \sim\bigr)$$
where the equivalence is given by
$$\mbox{$(x,y) \sim (x',y')$ iff $(\exists p:x=_I x')(F(p)(y) =_{F(x')}y')$.}$$ 
The injections
$$F(x) \to^{\iota_x} \Sigma(I,F)$$
are given by $\iota_x(y) =(x,y)$, and satisfy 
\begin{equation} \label{injprop}
\iota_{x'} \circ F(p) =_{ext} \iota_x \text{ for } p:x=_I x'.
\end{equation}
This construction satisfies the {\em universal property} that if $C$ is a setoid and $j_x: F(x) \to C$ ($x \in I$) are extensional functions
with $j_{x'} \circ F(p) =_{ext} j_x$ for all $p:x =_A x'$, then there is a unique extensional $k: \Sigma(I,F) \to C$ with  $k \circ \iota_x =_{ext} j_x$
for all $x \in I$. 


\begin{proposition} \label{pif2fsub}
Let $F$ be a family of setoids indexed by the setoid $I$. Then $F$ induces 
an extensional  function 
$$\check{F}: I \to {\rm P}(\Sigma(I,F))$$
where $\check{F}(x) = (F(x), \iota_x)$ and $\iota_x: F(x) \to \Sigma(I,F)$ is the canonical injection.
\end{proposition}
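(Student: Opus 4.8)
The plan is to verify two things: that each pair $\check{F}(x) = (F(x), \iota_x)$ genuinely lies in ${\rm P}(\Sigma(I,F))$, and that the assignment $x \mapsto \check{F}(x)$ respects the equalities $=_I$ and $\equaldot$. First I would check that $\iota_x : F(x) \to \Sigma(I,F)$ is an injection of setoids, so that $\check{F}(x)$ is a well-formed element of the preorder. Extensionality of $\iota_x$ is immediate: if $y =_{F(x)} y'$ then, taking $p = \refl(x)$ and using (F1) to rewrite $F(\refl(x))(y) =_{F(x)} y$, we obtain $(x,y) \sim (x,y')$, i.e. $\iota_x(y) \sim \iota_x(y')$. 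For injectivity, suppose $\iota_x(y) \sim \iota_x(y')$; unwinding the definition of $\sim$ yields some $p : x =_I x$ with $F(p)(y) =_{F(x)} y'$, and by (F1)--(F2) the transportation map $F(p)$ is extensionally the identity on $F(x)$, whence $y =_{F(x)} y'$. Thus $\iota_x$ is mono and $\check{F}(x) \in {\rm P}(\Sigma(I,F))$.

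Next I would establish extensionality of $\check{F}$, namely that a proof $p : x =_I x'$ forces $\check{F}(x) \equaldot \check{F}(x')$. For the inclusion $\check{F}(x) \subsetdot \check{F}(x')$ I would take the mediating map to be the transportation function $k = F(p) : F(x) \to F(x')$; property (\ref{injprop}) states precisely $\iota_{x'} \circ F(p) =_{\rm ext} \iota_x$, which is exactly the required factorisation $n \circ k = m$ in the definition of $\subsetdot$. For the reverse inclusion $\check{F}(x') \subsetdot \check{F}(x)$ I would use a symmetric proof $p' : x' =_I x$ (available since $=_I$ is an equivalence relation) and the transportation map $F(p')$, so that (\ref{injprop}) again supplies $\iota_x \circ F(p') =_{\rm ext} \iota_{x'}$. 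Having both inclusions yields $\check{F}(x) \equaldot \check{F}(x')$, which is the extensionality of $\check{F}$.

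The argument is essentially a bookkeeping exercise and presents no deep obstacle; the one point needing care is the injectivity of $\iota_x$, where one must not forget to invoke proof-irrelevance (F2) together with (F1) in order to collapse the transportation map $F(p)$ arising from a loop $p : x =_I x$ to the identity. I would also remark, without needing it for the conclusion, that the two mediating maps $F(p)$ and $F(p')$ are mutually inverse by (F1) and (F3), so the witnessing inclusions are in fact realised by an isomorphism $F(x) \cong F(x')$ in {\bf Setoids}.
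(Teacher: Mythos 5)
Your proof is correct and follows essentially the same route as the paper: both directions of the inclusion $\check{F}(x) \equaldot \check{F}(x')$ are obtained from property (\ref{injprop}) applied to $p$ and its inverse, with $F(p)$ serving as the mediating map. The only difference is that you additionally verify that $\iota_x$ is an injection of setoids (using (F1)--(F2)), a point the paper takes as already established when introducing the canonical injections of the setoid sum.
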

\begin{proof} It follows from (\ref{injprop}) that $p: x=_I y$, implies 
$\check{F}(x) \subsetdot \check{F}(y)$ and similarly $p^{-1}:y=_I x$ implies
$\check{F}(y) \subsetdot \check{F}(x)$. Thus $\check{F}(x) \equaldot \check{F}(y)$. 

\end{proof}

\section{Two categories of setoids and their isomorphism}

We provide some more details to the construction sketched in the introduction; see \cite{PW} for full details.
A  family $F$ of setoids over a setoid $I$ gives rise to a category of setoids   
${\cal C}={\cal C}(I,F)$ as follows. The objects are given by
the index setoid ${\cal C}_0=I$, and are thus equipped with equality, and the
setoid of arrows ${\cal C}_1$  is
$$((\Sigma i,j: |I|){\rm Ext}(F(i),F(j)),\sim)$$ 
which, thus, consists of triples $(i,j,f)$ where $f:F(i) \to F(j)$ is an extensional function,
and where
two arrows are equal $(i,j,f) \sim (i',j',f')$ if, and only if, there are proof objects $p: i=_I i'$ and $q: j=_I j'$  such that
the diagram
\begin{equation} \label{ijfdiagram}
\bfig\square[F(i)`F(j)`F(i')`F(j');f`F(p)`F(q)`f']\efig
\end{equation}
commutes.
The domain of the arrow $(i,j,f)$ is $i$ and its codomain is $j$. Arrows  $(i,j,f)$ and  $(j',k,g)$ are composable if there is $p:j=_I j'$ and
their composition is $(i,k, g \circ F(p) \circ f)$. Note that $F(p)$ and hence the composition is independent of $p$.  The setoid ${\cal C}_2$ of composable
arrows consists of such triples $((i,j,f),(j',k,g),p)$. Then ${\cal C}$ is a category in the essentially algebraic sense as shown in \cite{PW}. (See Appendix
for the formal definition of essentially algebraic category.)

The second construction is as follows.
Define a category ${\cal S}(I,F)$ whose setoid of objects is $I$, and whose arrows\footnote{The triples actually form a setoid since they can be represented by graphs of functions, as the isomorphism theorem shows later.} are triples $(i,j,R)$ where 
$R$ is  functional binary relation on $S=\Sigma(I,F)$ with ${\rm dom}(R) \equaldot F(i)$ and ${\rm ran}(R) \subsetdot F(j)$. Two
arrows $(i,j,R)$ and $(i',j',R')$ are equal when $i=_I i'$, $j=_I j'$ and $R \equaldot  R' $.
The domain and codomain of $(i,j,R)$ are $i$ and $j$ respectively. The composition of $(i,j,R)$ and $(j',k,Q)$ is
$(i,k,Q \circ R)$ when $j=_I j'$. Here $Q \circ R$ denotes the relational composition.

Now define a functor $M:{\cal C}(I,F) \to {\cal S}(I,F)$ by letting $M$ be the identity on objects, $M(i)=i$, and  for an arrow $(i,j,f)$
letting $M(i,j,f)= (i,j,{\cal G}_f)$ where ${\cal G}_f$ is the {\em graph of $f$} on $S\times S$ defined
by
\begin{equation} \label{graphdef}
(u,v) \dotin  {\cal G}_f \Longleftrightarrow_{\rm def} (\exists x \in F(i))[u=_{S} \langle i, x\rangle \land v=_{S}  \langle j, f(x)\rangle ]
\end{equation}
We show that $M$ is well-defined on arrows: Suppose that  $(i,j,f)$ and  $(i',j',f')$ are equal arrows in ${\cal C}(I,F)$, that is, there are $p:i=_I i'$ and $q:j=_J j'$ such that the diagram (\ref{ijfdiagram})
commutes. Note that for $x \in F(i)$,  $\langle i,x \rangle =_S \langle i',F(p)(x)\rangle$ and $\langle j,f(x) \rangle = \langle j',F(q)(f(x)) \rangle$.
Inserting this in  (\ref{graphdef}), substituting $x=F(p^{-1})(x')$  and then using the commutative square we get
\begin{eqnarray*}
(u,v) \dotin  {\cal G}_f &\Longleftrightarrow& (\exists x \in Fi)[u=_{S}\langle i',F(p)(x)\rangle \land v=_{S}   \langle j',F(q)(fx) \rangle ]\\
&\Longleftrightarrow& (\exists x' \in Fi')[u=_{S}\langle i',F(p)(F(p^{-1})(x')))\rangle \land \\
&& \qquad \qquad \qquad  
 v=_{S}   \langle j',F(q)(f(F(p^{-1})(x'))) \rangle ] \\
&\Longleftrightarrow& (u,v) \dotin  {\cal G}_{f'} 
\end{eqnarray*}
Thus $M$ is well-defined.

For objects $(i,j,f)$ and $(j',k,g)$ with $p:j=_Ij'$ we check functoriality by verifying that
\begin{equation} \label{graphfunctorial}
{\cal G}_g \circ {\cal G}_f \equaldot {\cal G}_{g \circ F(p) \circ f}
\end{equation}
and that
\begin{equation} \label{graphfunctorial2}
{\cal G}_{{\rm id}_{F(i)}}
\end{equation}
is the identity relation on the subset $\check{F}(i)$. To see (\ref{graphfunctorial}) expand the definition
and use that $F(p)=F(q)$ for $q:j=_Ij'$:
\begin{eqnarray*} 
(*) \qquad(u,v) \dotin  {\cal G}_g \circ {\cal G}_f 
&\Longleftrightarrow&(\exists x \in Fi)(\exists y \in Fj') \\
&& \qquad \qquad  (u=\langle i,x\rangle \land\langle j',y\rangle= \langle j,fx \rangle \land v= \langle k, gy)) \\
&\Longleftrightarrow&(\exists x \in Fi)(\exists y \in Fj')(\exists q:j=_I j') \\
&& \qquad \qquad  (u=\langle i,x\rangle \land   F(q)(fx) =_{Fj'} y  \land v= \langle k, gy)) \\
&\Longleftrightarrow&(\exists x \in Fi)(\exists y \in Fj') \\
&& \qquad \qquad  (u=\langle i,x\rangle \land   F(p)(fx) =_{Fj'} y  \land v= \langle k, gy)) \\
&\Longleftrightarrow&(\exists x \in Fi) \\
&& \qquad \qquad  (u=\langle i,x\rangle \land v= \langle k, g(F(p)(fx))\rangle) \\
&\Longleftrightarrow& (u,v) \dotin  {\cal G}_{g \circ F(p) \circ f} 
\end{eqnarray*}

Further
\begin{eqnarray*}
(**) \qquad (u,v) \dotin  {\cal G}_{{\rm id}_{Fi}} &\Longleftrightarrow & (\exists x \in Fi)[u=_{S} \langle i, x\rangle \land v=_{S}  \langle i, {\rm id}_{Fi}(x))\rangle ] \\
&\Longleftrightarrow & u=_{S} v \land (\exists x \in Fi)u=_{S} \langle i, x\rangle
\end{eqnarray*}
which is the identity relation on $\check{F}(i)$. Call this relation $I_{\check{F}(i)}$ for later use.

\medskip
Define in the opposite direction a functor $N:{\cal S}(I,F) \to\, {\cal C}(I,F)$ by letting it be the identity on objects, 
and for a morphism
 $(i,j,R)$ let $f:F(i) \to F(j)$ be the
unique extensional function such that 
\begin{equation} \label{GfR}
{\cal G}_f \equaldot R.
\end{equation}
Let $$N(i,j,R)=_{\rm def} (i,j,f).$$
Existence of $f$: Suppose $(i,j,R)$ is morphism. Hence 
$$(\forall x \in Fi)(\exists! y \in Fj)(\langle  i,x \rangle,\langle j,y \rangle) \dotin R.$$
Thus there is a unique extensional  $f: F(i) \to F(j)$ such that
\begin{equation} \label{graphed}
(\forall x \in Fi)(\langle  i,x \rangle,\langle j,f(x)\rangle) \dotin R.
\end{equation}
If $(u,v) \dotin {\cal G}_f$, then by (\ref{graphdef}) there is $x \in F(i)$ such that
$$u=_{S} \langle i, x\rangle \land v=_{S}  \langle j, f(x)\rangle.$$
Thus by (\ref{graphed}): $(u,v) \dotin R$.
Conversely, suppose  $(u,v) \dotin R$. Then since ${\rm dom}(R) = \check{F}(i)$ and 
${\rm ran}(R) \subsetdot \check{F}(j)$, there is $x \in F(i)$
 and $y \in F(j)$, with $u=_{S} \langle i, x\rangle$ and $v=_{S}  \langle j, y\rangle$.
 By uniqueness in (\ref{graphed}),  $y=f(x)$, so indeed  $(u,v) \dotin {\cal G}_f$. Thus (\ref{GfR})
 holds.
 
 Uniqueness of $f$: Suppose that ${\cal G}_{f'} \equaldot R$ for some $f':F(i) \to F(j)$. Then
 $(\forall x \in Fi)(\langle  i,x \rangle,\langle j,f'x\rangle) \dotin R.$ By uniqueness in
 (\ref{graphed}), $f'=f$.
 
 We show $N$ is well-defined on arrows: Suppose $(i,j,R)$ and $(i',j',R')$ are equal morphisms with $N(i,j,R) =(i,j,f)$ and $N(i',j',R')=(i',j',f')$.
 Thus $p:i=_I i'$ and $q:j=_J j'$ and $R \equaldot R'$, and hence
 $${\cal G}_f  \equaldot {\cal G}_{f'}.$$
 We show that (\ref{ijfdiagram}) commutes. Let $x \in F(i)$. Then by definition of the graph ${\cal G}_f $, we get
 $(\langle i, x \rangle,\langle j, f(x) \rangle)\dotin {\cal G}_f $, and hence also
 $(\langle i, x \rangle,\langle j, f(x) \rangle)\dotin {\cal G}_{f'} $. Again by the definition of graph:
$$ (\exists x' \in Fi')[\langle i, x \rangle=_{S} \langle i', x'\rangle \land \langle j, fx \rangle =_{S}  \langle j', f'x'\rangle ].$$
Thus for some $x' \in Fi'$, $p':i=_I i'$ and some $q':j=_I j'$ we have
$$F(p')(x)=_{Fi'} x' \qquad F(q')(fx) =_{Fj'} f'x'.$$
Hence
$$F(q')(fx) =_{Fj'} f'(F(p')(x)),$$
and since $F(q)=F(q')$ and $F(p) = F(p')$, we are done proving that the diagram commutes.

We check that $N$ is functorial: Suppose that $N(i,j,R) =(i,j,f)$ and $N(j',k,Q)=(j',k,g)$ with $p:j=_I j'$. Then
$$N(j',k,Q) \circ N(i,j,R)= (i,k, g \circ F(p) \circ f).$$
Now
$$N((j',k,Q) \circ (i,j,R)) = N(i,k, Q \circ R)=(i,k,h)$$
where $h:F(i) \to F(k)$ is unique such that 
${\cal G}_h  \equaldot Q \circ R$.
Moreover $f:F(i) \to F(j)$ is unique such that
${\cal G}_f  \equaldot R$, 
and $g:F(j') \to F(k)$ is unique such that
${\cal G}_g  \equaldot Q$. 
By (*) above we have
$$Q \circ R \equaldot  {\cal G}_g  \circ {\cal G}_f \equaldot {\cal G}_{g \circ F(p) \circ f}. $$
Hence $h = g \circ F(p) \circ f$ as required.

Suppose $N(i,i,I_{{\check F}(i)})=(i,i,f)$ where $f:F(i) \to F(i)$ is unique such that
$${\cal G}_f  \equaldot I_{{\check F}(i)}.$$
By (**) above
$${\cal G}_{{\rm id}_{F(i)}} \equaldot I_{{\check F}(i)}.$$
Hence $f= {\rm id}_{F(i)}$ as required.

 The functors $M$ and $N$ form an isomorphism of categories. This is clear for objects.
 Let $(i,j,R)$ be an arrow of ${\cal S}(I,F)$. Then
 $N(i,j,R)=(i,j,f)$ where $f:F(i) \to F(j)$ is unique such that ${\cal G}_f  \equaldot R.$ Now 
 $$M(N(i,j,R))=M(i,j,f)=(i,j, {\cal G}_f)=(i,j,R).$$
 Conversely 
 $$N(M(i,j,f))=N(i,j,G_f) = (i,j,f).$$
Thus we have established:

\begin{theorem} \label{mainiso}
${\cal S}(I,F) \cong {\cal C}(I,F)$ $\qed$
\end{theorem}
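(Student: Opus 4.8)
The plan is to exhibit a pair of functors $M : {\cal C}(I,F) \to {\cal S}(I,F)$ and $N : {\cal S}(I,F) \to {\cal C}(I,F)$ that are the identity on objects and mutually inverse on arrows. Both categories have the same object setoid $I$, so the bijection on objects is immediate and all the content lies in setting up a bijective correspondence between arrows that respects composition and identities.

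First I would define $M$ by sending an arrow $(i,j,f)$ to $(i,j,{\cal G}_f)$, where ${\cal G}_f$ is the graph of $f$ inside $S \times S$ with $S = \Sigma(I,F)$, as in (\ref{graphdef}). The key point is that $M$ is well defined on arrow setoids: if $(i,j,f) \sim (i',j',f')$ via proofs $p : i =_I i'$ and $q : j =_I j'$ making the square (\ref{ijfdiagram}) commute, then ${\cal G}_f \equaldot {\cal G}_{f'}$. This is exactly where the transportation maps are absorbed, since by the injection property (\ref{injprop}) one has $\langle i, x\rangle =_S \langle i', F(p)(x)\rangle$ and $\langle j, f(x)\rangle =_S \langle j', F(q)(f(x))\rangle$; after the substitution $x = F(p^{-1})(x')$ the commuting square rewrites the defining condition of ${\cal G}_f$ into that of ${\cal G}_{f'}$.

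In the opposite direction I would define $N$ on an arrow $(i,j,R)$ by extracting the unique extensional function $f : F(i) \to F(j)$ whose graph is $R$. Since $R$ is total and functional with ${\rm dom}(R) \equaldot F(i)$ and ${\rm ran}(R) \subsetdot F(j)$, the statement $(\forall x \in F(i))(\exists! y \in F(j))(\langle i,x\rangle, \langle j,y\rangle) \dotin R$ holds, so unique choice yields an $f$ with ${\cal G}_f \equaldot R$ as in (\ref{GfR}), while uniqueness of $f$ follows at once from functionality of $R$. Well-definedness of $N$ on arrow setoids then follows by reading the equivalence $R \equaldot R'$ back through the graphs to recover the commuting square (\ref{ijfdiagram}).

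Functoriality of both functors reduces to the two identities (\ref{graphfunctorial}) and (\ref{graphfunctorial2}): that ${\cal G}_g \circ {\cal G}_f \equaldot {\cal G}_{g \circ F(p) \circ f}$ for composable arrows, and that ${\cal G}_{{\rm id}_{F(i)}}$ is the identity relation $I_{\check{F}(i)}$ on $\check{F}(i)$. Both are checked by expanding relational composition together with the definition of the graph, invoking proof-irrelevance (F2) to replace the existentially quantified proof $q : j =_I j'$ by the given $p$. Finally $M$ and $N$ are seen to be mutually inverse on arrows: $M(N(i,j,R)) = (i,j,{\cal G}_f) = (i,j,R)$ by (\ref{GfR}), and $N(M(i,j,f)) = (i,j,f)$ by the uniqueness in the defining property of $N$. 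I expect the main obstacle to be the well-definedness of $M$ on arrows --- verifying that the transportation maps $F(p)$ and $F(q)$ genuinely cancel in the passage to graphs --- since that is precisely the step which forces the two a priori different notions of equality of arrows to agree.
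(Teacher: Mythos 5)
Your proposal is correct and follows essentially the same route as the paper: the graph functor $M$, the inverse $N$ extracted by unique choice from total functional relations, well-definedness of both via the commuting square (\ref{ijfdiagram}), and functoriality via the identities (\ref{graphfunctorial}) and (\ref{graphfunctorial2}). You have also correctly identified the step where the transportation maps cancel, which is exactly the computation the paper carries out in detail.
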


\section{Full images of categories in E-categories}
The construction of ${\cal C}(I,F)$ may actually be constructed as a full image of $F$ regarded as an
E-functor from $I$ (as discrete category) into the E-category of setoids. This follows from a general
full image construction (Theorem \ref{fullimage}). To prepare for a formal proof of this we need to present some more notions.

An equivalent formulation of category is the following (see Appendix for a proof of equivalence).
A {\em hom family presented category} $\bbC$ (or just HF-category) consists of a setoid ${\rm Ob}\;\bbC={\rm Ob}$ and a (proof-irrelevant) family ${\rm Hom}_{\bbC}$ of setoids indexed by the setoid ${\rm Ob} \times {\rm Ob}$. We often write, as is usual, $\bbC(a,b)$ for ${\rm Hom}_{\bbC}(a,b)$.
For each $a \in {\rm Ob}$, there is an element ${\rm id}_a \in {\rm Hom}(a,a)$.
Moreover for all $a,b,c \in {\rm Ob}$ there is an extensional function
$$\circ_{a,b,c} =\circ: {\rm Hom}(b,c) \times {\rm Hom}(a,b) \to {\rm Hom}(a,c).$$
These satisfies the usual equations of identity and associativity.
Moreover, for $p: a=_{\rm Ob}a'$,
\begin{equation} \label{coherenceid}
{\rm id}_{a'} = {\rm Hom}(p,p)({\rm id}_a)
\end{equation}
and for $p: a=_{\rm Ob}a'$, $q: b=_{\rm Ob}b'$ and $r: c=_{\rm Ob}c'$ this diagram commutes:
\begin{equation} \label{coherencecomp}
\bfig\square<1200,500>[{\rm Hom}(b,c) \times {\rm Hom}(a,b)`{\rm Hom}(a,c)`{\rm Hom}(b',c') \times 
{\rm Hom}(a',b')`{\rm Hom}(a',c');\circ_{a,b,c}`{\rm Hom}(q,r)\times {\rm Hom}(p,q)`{\rm Hom}(p,r)`\circ_{a',b',c'}]\efig
\end{equation}
The equations (\ref{coherenceid}) and (\ref{coherencecomp}) are {\em coherence conditions} for the transportation 
maps of the hom-family.

A weaker notion is that of a {\em E-category,} where we require in the above instead that ${\rm Ob}$ is type, and that ${\rm Hom}$ is family of setoids indexed by the type ${\rm Ob} \times {\rm Ob}$. Moreover
we drop equations (\ref{coherenceid}) and (\ref{coherencecomp}).
Any HF-category may be considered as an E-category by omitting the equality on objects.

A {\em functor $F$} from the HF-category $\bbC$ to the HF-category $\bbD$ consists of an extensional function $F_0: {\rm Ob}\; \bbC \to {\rm Ob}\; \bbD$ and for each
pair of objects $a,b \in {\rm Ob}\; \bbC$,  an extensional function $F_{a,b}: {\rm Hom}_{\bbC}(a,b) \to {\rm Hom}_{\bbD}(F_0(a),F_0(b))$
satisfying the usual functoriality equations.
Moreover it is required that for $p:a=_{{\rm Ob}\; \bbC} a'$, $q:b=_{{\rm Ob}\; \bbC}b'$, the diagram
\begin{equation} \label{coherencefunct}
\bfig\square<1200,500>[ {\rm Hom}(a,b)`{\rm Hom}(F_0(a),F_0(b))` {\rm Hom}(a',b')`{\rm Hom}(F_0(a'),F_0(b'));F_{a,b}` {\rm Hom}(p,q)`{\rm Hom}({\rm ext}(F_0,p),{\rm ext}(F_0,q))`F_{a',b'}]\efig
\end{equation}
commutes. Here ${\rm ext}(F_0,r)$ denotes the canonical proof that $F_0(c)=_{{\rm Ob}\; \bbD} F_0(c')$ for
$r: c =_{{\rm Ob}\; \bbC} c'$. (Because of the proof-irrelevance of ${\rm Hom}$, it does not matter what this proof object actually is the diagram above.)

For an {\em E-functor} between E-categories the condition that $F_0$ is extensional is omitted, and the 
coherence condition (\ref{coherencefunct}) is dropped.

\medskip
We may construct the full image of an E-functor as an HF-category if the source category is an HF-category.

\medskip
\begin{theorem} \label{fullimage}
Let $F:\bbC \to \bbD$ be an E-functor from an HF-category $\bbC$ to an E-category $\bbD$. 
Then for the HF-category $\bbS$ with objects ${\rm Ob}\; \bbS =_{\rm def} {\rm Ob}\; \bbC$ and 
$${\bbS}(a,b) =_{\rm def} {\bbD}(F(a),F(b)),$$
and where
$${\rm id}^{\bbS}_a =_{\rm def} {\rm id}^{\bbD}_{F(a)}, \qquad \circ_{a,b,c}^{\bbS}=_{\rm def} \circ_{F(a),F(b),F(c)}^{\bbD},$$
there is a functor $G: \bbC \to \bbS$ given by $G(a)=_{\rm def} a$ and $G_{a,b}(f)=_{\rm def} F_{a,b}(f)$
which is surjective on objects. The HF-category $\bbS$ is a full E-subcategory of $\bbD$.
\end{theorem}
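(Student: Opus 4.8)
The plan is to verify, in turn, the four assertions bundled into the statement: that $\bbS$ really is an HF-category, that $G$ is a functor, that $G$ is surjective on objects, and that $\bbS$ sits in $\bbD$ as a full E-subcategory. The category laws for $\bbS$ — associativity, the unit laws, and extensionality of $\circ^{\bbS}$ — are inherited verbatim from $\bbD$, since by definition ${\bbS}(a,b)=\bbD(F(a),F(b))$, ${\rm id}^{\bbS}_a={\rm id}^{\bbD}_{F(a)}$ and $\circ^{\bbS}=\circ^{\bbD}$; these require nothing beyond the corresponding laws in $\bbD$ and carry no coherence content. The real work is to equip the hom-family of $\bbS$ with transportation maps over the setoid ${\rm Ob}\;\bbC\times{\rm Ob}\;\bbC$ and to verify (F1)--(F3) together with the coherence conditions (\ref{coherenceid}) and (\ref{coherencecomp}); note that these are exactly the data an E-functor fails to supply, since $F_0$ is not assumed extensional and (\ref{coherencefunct}) is dropped.

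The key device is the canonical isomorphism attached to an equality proof in the HF-category $\bbC$: for $p:a=_{{\rm Ob}\;\bbC}a'$ set $\iota_p := {\rm Hom}_{\bbC}(\refl_a,p)({\rm id}_a)\in\bbC(a,a')$. First I would record the standard lemma, provable from (\ref{coherenceid}) and (\ref{coherencecomp}) alone, that $\iota_p$ is an isomorphism with inverse $\iota_{p^{-1}}$, that $\iota_{\refl_a}={\rm id}_a$ and $\iota_{p'\circ p}=\iota_{p'}\circ\iota_p$, and --- crucially --- that the transport of $\bbC$ is conjugation by these isomorphisms, $\bbC(p,q)(f)=\iota_q\circ f\circ\iota_p^{-1}$. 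By proof-irrelevance (F2) of $\bbC$'s hom-family, the map $\bbC(\refl_a,p)$, and hence $\iota_p$, depends only on the endpoints $a,a'$ and not on the chosen proof $p$.

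Using this I would define the transport for $\bbS$ by conjugating with the $F$-images of these isomorphisms: for $p:a=_{{\rm Ob}\;\bbC}a'$, $q:b=_{{\rm Ob}\;\bbC}b'$ put $P:=F_{a,a'}(\iota_p)$ and $Q:=F_{b,b'}(\iota_q)$, and set
$$\bbS(p,q)(g) := Q\circ g\circ P^{-1}, \qquad g\in\bbD(F(a),F(b)).$$
Because $F$ is a functor --- an E-functor still preserves identities and composition --- $P$ is an isomorphism $F(a)\to F(a')$ in $\bbD$ with inverse $F_{a',a}(\iota_{p^{-1}})$, so $\bbS(p,q)$ is a well-defined extensional map. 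Then (F1) follows from $\iota_{\refl}={\rm id}$ and $F({\rm id})={\rm id}$; (F2) from the endpoint-dependence of $\iota_p$ just noted; and (F3) from $\iota_{p'\circ p}=\iota_{p'}\circ\iota_p$ and functoriality of $F$, since conjugations compose. The coherence conditions (\ref{coherenceid}) and (\ref{coherencecomp}) for $\bbS$ reduce to $P\circ P^{-1}={\rm id}_{F(a')}$ and to both sides of the square being conjugation by the same isomorphisms; notably no coherence on the $\bbD$-side is invoked, which is exactly why $\bbD$ may be a mere E-category.

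For the functor $G$, the map $G_0={\rm id}$ is extensional and surjective (indeed bijective) on objects, while $G_{a,b}=F_{a,b}$ inherits preservation of identities and composition from $F$ via the definitions ${\rm id}^{\bbS}_a={\rm id}^{\bbD}_{F(a)}$ and $\circ^{\bbS}=\circ^{\bbD}$. The functor coherence (\ref{coherencefunct}) for $G$ unwinds, using $G_0={\rm id}$, to the identity $\bbS(p,q)\circ F_{a,b}=F_{a',b'}\circ\bbC(p,q)$, and this is precisely the conjugation lemma: $F_{a',b'}(\bbC(p,q)(f))=F(\iota_q\circ f\circ\iota_p^{-1})=Q\circ F_{a,b}(f)\circ P^{-1}$. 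Finally, the evident E-functor $\bbS\to\bbD$ that acts as $F_0$ on the underlying type of objects and as the identity on each hom-setoid $\bbS(a,b)=\bbD(F(a),F(b))$ is full and faithful by construction, exhibiting $\bbS$ as a full E-subcategory of $\bbD$. I expect the main obstacle to be the conjugation lemma together with the isomorphism properties of $\iota_p$: everything else is bookkeeping, but these are the only steps where the coherence conditions of the HF-category $\bbC$ are genuinely used, and they are precisely what let us manufacture coherent transportation maps for $\bbS$ out of an $F$ that provides none.
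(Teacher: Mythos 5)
Your proposal is correct and follows essentially the same route as the paper: both define the transport $\bbS(p,q)$ as conjugation by $F$-images of the canonical transported identities of $\bbC$ (your $\iota_p = \bbC(\refl_a,p)({\rm id}_a)$ is exactly the paper's ${\bbC}(r(a),p)({\rm id}_a)$, and your $P^{-1}=F(\iota_{p^{-1}})$ is the paper's $F({\bbC}(p,r(a))({\rm id}_a))$), and both verify (F1)--(F3) and the coherence conditions from the coherence of $\bbC$ plus functoriality of $F$. The only difference is organizational: you factor the inline computations of the paper through an explicit conjugation lemma, which is a clean but equivalent packaging.
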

\begin{proof}
It is clear that $\bbS$ is an E-category. We show it is an HF-category as well.
For $p:a=_{{\rm Ob}\; \bbS}a'$ and $q:b=_{{\rm Ob}\; \bbS} b'$, we need to define the transportation
map
$${\bbS}(p,q):{\bbS}(a,b) \to {\bbS}(a',b')$$
From the transportation maps of $\bbC$, we have ${\bbC}(p, r(a)): {\bbC}(a,a) \to {\bbC}(a',a)$
and 
 ${\bbC}(r(b),q): {\bbC}(b,b) \to {\bbC}(b,b')$
so ${\bbC}(p,r(a))({\rm id}_a) \in {\bbC}(a',a)$ and ${\bbC}(r(b),q)({\rm id}_b) \in {\bbC}(b,b').$
Thus let
 $$ {\bbS}(p,q)(f) = F({\bbC}(r(b),q)({\rm id}_b)) \circ f \circ F({\bbC}(p,r(a))({\rm id}_a)).$$
It is clear that ${\bbS}(p,q)$ is  extensional.  We also have ${\bbS}(p,q) =_{\rm ext} {\bbS}(p',q')$ for all $p,p':a=_{\bbS} a'$ and all $q,q': b=_{\bbS} b'$, since ${\bbC}$ is a proof-irrelevant family.
Moreover
$${\bbS}(r(a),r(b))(f) = {\rm id}_b \circ f \circ {\rm id}_a =f.$$
For $q: b=_{\bbS} b'$, $q': b'=_{\bbS} b''$,  $p: a=_{\bbS} a'$, $p': a'=_{\bbS} a''$, 
$$
 {\bbS}(p'\circ p,q'\circ q)(f) =  
F({\bbC}(r(b),q'\circ q)({\rm id}_b)) \circ f \circ F({\bbC}(p'\circ p,r(a))({\rm id}_a)).
$$

By using the coherence conditions for hom-setoids we obtain
\begin{eqnarray*} 
\lefteqn{{\bbC}(r(b'),q')({\rm id}_{b'}) \circ  
{\bbC}(r(b),q)({\rm id}_b)} \\
&\overset{(\ref{coherenceid})}{=}& 
{\bbC}(r(b'),q')({\bbC}(q,q)({\rm id}_b)) \circ  
{\bbC}(r(b),q)({\rm id}_b)   \\
&\overset{(F3)}{=}& 
{\bbC}(r(b')\circ q,q'\circ q)({\rm id}_b) \circ  
{\bbC}(r(b),q)({\rm id}_b)   \\
&=& 
{\bbC}(q,q'\circ q)({\rm id}_b) \circ  
{\bbC}(r(b),q)({\rm id}_b)   \\
&\overset{(\ref{coherencecomp})}{=}& 
{\bbC}(r(b),q'\circ q)({\rm id}_b \circ {\rm id}_b)  
= 
{\bbC}(r(b),q'\circ q)({\rm id}_b) 
\end{eqnarray*}
Similarly, 
\begin{eqnarray*}
\lefteqn{{\bbC}(p,r(a))({\rm id}_a) \circ  
{\bbC}(p',r(a'))({\rm id}_{a'})} \\
&\overset{(\ref{coherenceid})}{=}&{\bbC}(p,r(a))({\rm id}_a) \circ  
{\bbC}(p',r(a'))({\bbC}(p,p)({\rm id}_a)) \\
&\overset{(F3)}{=}&  {\bbC}(p,r(a))({\rm id}_a) \circ  {\bbC}(p'\circ p,r(a') \circ p)({\rm id}_a)  \\
&\overset{(\ref{coherencecomp})}{=}& {\bbC}(p'\circ p,r(a))({\rm id}_a) 
\end{eqnarray*}
Thus
\begin{eqnarray*}
\lefteqn{
 {\bbS}(p'\circ p,q'\circ q)(f)} \\
 &=&
 F{\bbC}(r(b'),q')({\rm id}_b')) \circ  
F({\bbC}(r(b),q)({\rm id}_b)) \circ \\
 && \qquad \qquad f \circ F({\bbC}(p,r(a))({\rm id}_a)) \circ  
F({\bbC}(p',r(a'))({\rm id}_{a'})) \\
&=& {\bbS}(p',q')({\bbS}(p,q)(f))
 \end{eqnarray*}
Hence ${\rm Hom}_{\bbS}$ is a proof-irrelevant family over ${\rm Ob}\; \bbS \times {\rm Ob}\; \bbS$.
The equations for identity and associativity are clearly fulfilled, since they are inherited from $\bbD$. The coherence conditions (\ref{coherenceid}) and (\ref{coherencecomp})
follows by functoriality of $F$: As for (\ref{coherenceid})
suppose $p: a=_{{\rm Ob}\; \bbS} a'$.
\begin{eqnarray*}
\bbS(p,p)({\rm id}^\bbS_a) &=&
F({\bbC}(r(a),p)({\rm id}_{a})) \circ {\rm id}^\bbD_{F(a)} \circ F({\bbC}(p,r(a))({\rm id}_a)) \\
&=&
F({\bbC}(r(a),p)({\rm id}_{a})) \circ F({\bbC}(p,r(a))({\rm id}_a)) \\
&=&
F({\bbC}(r(a),p)({\rm id}_{a}) \circ {\bbC}(p,r(a))({\rm id}_a))\\
&=&
F({\bbC}(p,p)({\rm id}_{a})) = F({\rm id}_{a'}) = {\rm id}^\bbD_{F(a')} = {\rm id}^\bbS_{a'}\
\end{eqnarray*}
Regarding the condition (\ref{coherencecomp}) suppose that
$p: a=_{{\rm Ob}\; \bbS}a'$, $q: b=_{{\rm Ob}\; \bbS}b'$ and $r: c=_{{\rm Ob}\; \bbS}c'$
and that $f \in \bbS(b,c)$ and $g \in \bbS(a,b)$,
\begin{eqnarray*}
\bbS(q,r)(f) \circ \bbS(p,q)(g) &=& F({\bbC}(r(c),r)({\rm id}_c)) \circ f \circ F({\bbC}(q,r(b))({\rm id}_b))
 \circ \\
 && \qquad \qquad F({\bbC}(r(b),q)({\rm id}_b)) \circ g \circ F({\bbC}(p,r(a))({\rm id}_a)) \\
 &=& F({\bbC}(r(c),r)({\rm id}_c)) \circ f \circ \\
 && \qquad \qquad F({\bbC}(q,r(b))({\rm id}_b) \circ {\bbC}(r(b),q)({\rm id}_b)) \circ g \circ F({\bbC}(p,r(a))({\rm id}_a)) \\
&=& F({\bbC}(r(c),r)({\rm id}_c)) \circ f \circ \\
 && \qquad \qquad F({\bbC}(q,q)({\rm id}_b)) \circ g \circ F({\bbC}(p,r(a))({\rm id}_a)) \\
&=& F({\bbC}(r(c),r)({\rm id}_c)) \circ f \circ g \circ F({\bbC}(p,r(a))({\rm id}_a)) \\
&=& \bbS(p,r)(f \circ g).
\end{eqnarray*}

 $G$ is evidently an E-functor surjective on objects. We check the coherence condition 
 (\ref{coherencefunct}): Suppose that $p:a=_{{\rm Ob}\; \bbC} a'$, $q:b=_{{\rm Ob}\; \bbC}b'$
 and that $f \in \bbC(a,b)$. Write $p'={\rm ext}(G,p)$ and $q'={\rm ext}(G,q)$.
 \begin{eqnarray*}
 \bbS(p',q')(G(f))&=& 
 F({\bbC}(r(b),q')({\rm id}_b)) \circ G_{a,b}(f) \circ F({\bbC}(p',r(a))({\rm id}_a))
 \\
 &=&  F({\bbC}(r(b),q')({\rm id}_b)) \circ F(f) \circ F({\bbC}(p',r(a))({\rm id}_a))  \\
 &=& F({\bbC}(r(b),q')({\rm id}_b) \circ f \circ {\bbC}(p',r(a))({\rm id}_a)) \\
 &=& F({\bbC}(r(b),q')({\rm id}_b) \circ \bbC(r(a),r(b))(f) \circ {\bbC}(p',r(a))({\rm id}_a)) \\
 &\overset{(\ref{coherencecomp})}{=}& F({\bbC}(r(a),q')({\rm id}_b \circ f) \circ {\bbC}(p',r(a))({\rm id}_a)) \\
  &\overset{(\ref{coherencecomp})}{=}& F({\bbC}(p',q')({\rm id}_b \circ f \circ {\rm id}_a)) \\
  &=& F({\bbC}(p',q')(f)) = G({\bbC}(p',q')(f))
 \end{eqnarray*}
 
\end{proof}

\begin{example} {\em Let $\bbC$ be discrete category arising from a setoid $A$ and let $\bbD={\bf Setoids}$ be the  E-category of setoids.  Suppose that $F$ is a proof-irrelevant family of setoids indexed by $A$. Then $F$ may be considered as an E-functor $\bbC \to {\bf Setoids}$, and the full image $\bbS$ is essentially ${\cal C}(A,F)$.
}
\end{example}

\section*{Appendix: Categories in type theory}

\subsection*{Essentially Algebraic Formulation}

Similarly to the standard set-theoretic definition, we define
in type theory a {\em category} $\bbC$ as a triple of setoids $\bbC_0$, $\bbC_1$, $\bbC_2$ consisting of  {\em objects,} {\em arrows} and {\em composable arrows,}
equipped with extensional functions ${\sf id}: \bbC_0 \to\, \bbC_1$, ${\sf dom}, {\sf cod}: \bbC_1 \to\, \bbC_0$ and
${\sf cmp}, {\sf fst}, {\sf snd}: \bbC_2 \to \bbC_1$ that satisfy the axioms

\begin{multicols}{2}
\begin{itemize}
\item[A1.] ${\sf dom}({\sf id} (x)) = x$,
\item[A2.] ${\sf cod}({\sf id} (x)) = x$,
\item[A3.] ${\sf dom}({\sf cmp}(u)) =
                    {\sf dom} ({\sf fst}(u))$, 
\item[A4.] ${\sf cod}({\sf cmp}(u)) =
                    {\sf cod} ({\sf snd}(u))$,
\end{itemize}
\end{multicols}

and

\begin{itemize}
\item[A5.] ${\sf fst}(u) = {\sf fst}(v),
                 {\sf snd}(u) ={\sf snd}(v) \implies u = v$,
\item[A6.] ${\sf dom}(f) = {\sf cod}(g) \implies
    \exists u \in \bbC_2 ({\sf snd}(u) = f  \land {\sf fst}(u) = g)$,
\item[A7.] ${\sf fst}(u) = {\sf id}(y) \implies
              {\sf cmp}(u) = {\sf snd}(u)$,
\item[A8.] ${\sf snd}(u) = {\sf id}(x) \implies
              {\sf cmp}(u) = {\sf fst}(u)$,
\item[A9.] ${\sf fst}(w) = {\sf fst}(v), {\sf snd}(v) = {\sf fst}(u), 
       {\sf snd}(u) = {\sf snd}(z),{\sf snd}(w) = {\sf cmp}(u), 
       {\sf cmp}(v) = {\sf fst}(z) \implies {\sf cmp}(w) = {\sf cmp}(z)$.
\end{itemize}

A {\em functor} $F: \bbB \to \bbC$ is a triple of extensional functions $F_k: \bbB_k \to \bbC_k$, $k=0,1,2$, such
that all operations of the categories are preserved, that is
\begin{multicols}{2}
\begin{itemize}
\item[] $F_1 \circ {\sf id} = {\sf id} \circ F_0$,
\item[] $F_0 \circ {\sf dom} = {\sf dom} \circ F_1$,
\item[] $F_0 \circ {\sf cod} = {\sf cod} \circ F_1$,
\item[] $F_1 \circ {\sf fst} = {\sf fst} \circ F_2$,
\item[] $F_1 \circ {\sf snd} = {\sf snd} \circ F_2$,
\item[] $F_1 \circ {\sf cmp} = {\sf cmp} \circ F_2$. 
\end{itemize}
\end{multicols}

\subsection*{Equivalence to the Hom Family Formulation}

Let $\bbC$ be a category formulated in the algebraic formulation. We define an HF-category ${\cal C}$. The objects of $\cal C$ is $\bbC_0$. For $a, b \in {\cal C}$ define the setoid
$${\rm Hom}_{\cal C}(a,b) = (\Sigma f \in \bbC_1.{\sf dom}(f)=_{\bbC_0}a \land {\sf cod}(f) =_{\bbC_0} b, \sim)$$
where $(f,r) \sim (f',r')$ if and only if $f=_{\bbC_1} f'$.  For $p:a =_{\bbC_0} a'$ and $q:b =_{\bbC_0} b'$,
define an extensional function
$${\rm Hom}(p,q): {\rm Hom}(a,b) \to {\rm Hom}(a',b')$$
by letting
$${\rm Hom}(p,q)(f,r) =(f,r'),$$
where $r'$ is some proof of ${\rm dom}(f)=a' \land {\rm cod}(f) = b'$ obtained from $r$, $p$ and $g$. As the second component $r'$ is irrelevant, ${\rm Hom}$ is a proof-irrelevant family of setoids on $\bbC_0 \times \bbC_0$.

For $a \in \bbC_0$, let ${\rm id}_{a} =({\sf id}(a),r)$ where $r$ is some proof that ${\sf dom}({\sf id}(a))=_{\bbC_0}a \land {\sf cod}({\sf id}(a)) =_{\bbC_0} a$. This uses (A1) and (A2). For $p:a=_{\bbC_0} a'$
$${\rm Hom}(p,p)({\rm id}_a) \sim {\rm Hom}(p,p)({\sf id}(a),r) \sim  ({\sf id}(a),r'') \sim ({\sf id}(a'),r') \sim {\rm id}_{a'}  $$
as required by (\ref{coherenceid}).
Define composition 
$$\circ:{\rm Hom}(b,c) \times {\rm Hom}(a,b) \to {\rm Hom}(a,c)$$ 
as follows. For $(g,r)\in {\rm Hom}(b,c)$ and $(f,s)\in {\rm Hom}(a,b)$, we have ${\sf cod}(f) = {\sf dom}(g) =b$. By (A5) and (A6) there
is a unique $u \in \bbC_2$ such that ${\sf snd}(u)=g$ and ${\sf fat}(u)=f$. Let $h= {\sf cmp}(u)$. By (A3) and (A4) follows then ${\sf dom}(h)= a$ and ${\sf cod}(h) = c$. Hence $(h,r') \in {\rm Hom}(a,c)$ for some $r'$ (which is irrelevant).
Let thus 
$$(g,r) \circ_{a,b,c} (f,s) =_{\rm def} (h,r').$$
Since the second components are irrelevant (H1) -- (H3) below follows easily from (A7) -- (A9).
 \begin{itemize}
\item[(H1)] ${\rm id}_b \circ f =_{{\rm Hom}(a,b)} f$ for $f \in {\rm Hom}(a,b)$,
\item[(H2)] $f  \circ {\rm id}_a=_{{\rm Hom}(a,b)} f$ for $f \in {\rm Hom}(a,b)$,
\item[(H3)] $f \circ (g \circ h) =_{{\rm Hom}(a,d)} (f \circ g) \circ h$ for $h \in {\rm Hom}(a,b)$, $g \in {\rm Hom}(b,c)$ 
and $f \in {\rm Hom}(a,b)$.
\end{itemize}
The irrelevance property of the second component also entails (\ref{coherencecomp}).

\medskip
Conversely suppose that ${\cal C}$ is an HF-category. Define an essentially algebraic category $\bbC$, by letting  $\bbC_0 = {\rm Ob}\;{\cal C}$. Then define $\bbC_1$ to be the setoid consisting of triples
$$(a,b,f)$$
where $f \in {\rm Hom}_{\cal C}(a,b)$. Define a relation $\sim$ by letting
$$(a,b,f) \sim (a',b',f') \Longleftrightarrow_{\rm def} (\exists p:a=_{\bbC_0} a')(\exists q:b =_{\bbC_0} b') {\rm Hom}_{\cal C}(p,q)(f) = f'.$$
This is an equivalence relation since ${\rm Hom}_{\cal C}$ is a proof-irrelevant family. 
Define ${\sf dom}(a,b,f) = a$ and ${\sf cod}(a,b,f) = b$.

The setoid  $\bbC_2$ of composable maps consists of triples 
$$({\bf f}, {\bf g},p)$$
where ${\bf f} \in \bbC_1$, ${\bf g} \in \bbC_1$ and $p:{\sf cod}({\bf f})=_{\bbC_0} {\sf dom}({\bf g})$.
Define 
$$({\bf f}, {\bf g},p)=_{\bbC_2} ({\bf f'}, {\bf g'},p') \Longleftrightarrow_{\rm def} {\bf f}=_{\bbC_1} {\bf f'} \land 
{\bf g}=_{\bbC_1}  {\bf g'}.$$
Define ${\sf cmp}: \bbC_2 \to \bbC_1$ as follows
$${\sf cmp}((a,b,f),(c,d,g),p) = (a,d, g \circ F(p) \circ f).$$
Here $F(p) = {\rm Hom}(r(a),p)({\rm id}_a)$, where $r(a): a=_{\bbC_0}a$.
The conditions (A1) -- (A9) may be verified straightforwardly using identities such as
$${\rm Hom}(p,q)({\rm id}_a) \circ g = {\rm Hom}(q^{-1}\circ p,{\rm r}(b))(g),$$
$$f \circ {\rm Hom}(p,q)({\rm id}_a) = {\rm Hom}(p\circ q^{-1},{\rm r}(b))(f).$$
and the fact that ${\rm Hom}$ is a proof irrelevant family.

\end{document}